\renewcommand*\subjclass[2][2010]{%
  \def\@subjclass{#2}%
  \@ifundefined{subjclassname@#1}{%
    \ClassWarning{\@classname}{Unknown edition (#1) of Mathematics
      Subject Classification; using '2010'.}%
  }{%
    \@xp\let\@xp\subjclassname\csname subjclassname@#1\endcsname
  }%
}
\newtheorem{theorem}{Theorem}[section]
\newtheorem{corollary}[theorem]{Corollary}
\newtheorem{identity}[theorem]{Identity}
\newtheorem{congruence}[theorem]{Congruence}
\theoremstyle{definition}
\newtheorem{definition}[theorem]{Definition}
\newtheorem{remark}[theorem]{Remark}
\renewcommand*\subjclass[2][2010]{%
  \def\@subjclass{#2}%
  \@ifundefined{subjclassname@#1}{%
    \ClassWarning{\@classname}{Unknown edition (#1) of Mathematics
      Subject Classification; using '1991'.}%
  }{%
    \@xp\let\@xp\subjclassname\csname subjclassname@#1\endcsname
  }%
}
\begin{document}
\title[Several generalizations and variations of Chu-Vandermonde identity ]
{Several generalizations and variations of\\ Chu-Vandermonde identity}

\author{Romeo Me\v strovi\' c}
\address{Maritime Faculty Kotor, University of Montenegro, 
85330 Kotor, Montenegro} 
\email{romeo@ac.me}

 \subjclass{05A19, 11B65,   60C05,  11A07, 05A10}
\keywords{Chu-Vandermonde identity, Combinatorial
 identity,   Complex-valued discrete random variable,   
 $k$th moment of a random variable, Probabilistic method, Congruence}
 
\begin{abstract}
In this paper we prove some combinatorial identities which can be considered 
as   generalizations and variations of remarkable Chu-Vandermonde 
identity. These identities are proved by 
using an elementary combinatorial-probabilistic approach   
to the expressions for the  $k$-th moments ($k=1,2,3$) 
of some particular cases of  recently investigated discrete random variables.
Using one of these  Chu-Vandermonde-type identities, two combinatorial  
 congruences are established.  
   \end{abstract}  
  \maketitle

\section{Introduction and Preliminaries}

As noticed in \cite[Section 1.1]{as}, the probabilistic method is a
powerful tool in tackling many problems in Discrete Mathematics
(Combinatorics, Graph Theory, Number Theory and Combinatorial Geometry).
More recently, it has been applied in the development of 
efficient algorithmic techniques and in the study of various computational
problems.

In this paper we present three combinatorial
 identities whose proofs are based on a simple probability  
technique consisting  on calculations of $k$-th moments ($k=1,2,3$) of 
some discrete random variables. Our proofs consist of 
showing that these identities essentially compute the moments 
of order $k$ $(k=1,2,3)$ of the discrete random variable 
defined in \cite{m1}. Notice that this random variable
is a generalization of the complex-valued discrete random variable
defined in \cite{ssa} by providing a statistical analysis for efficient 
detection of signal components when missing data samples are present
(cf. \cite{sso}). 
On the other hand,  the author of this paper continued the 
research on the mentioned  complex-valued discrete random variables
\cite{m8}.

Notice that combinatorial identities and combinatorial problems 
appear in many areas of mathematics, notably in 
Number Theory, Probability Theory, Topology, Geometry, 
Mathematical Optimization, Computer Science, Ergodic Theory and Statistical 
Physics.

As usually, throughout our considerations we use the 
term ``multiset'' (often written as ``set'') to mean ``a totality having possible 
multiplicities''; so that two (multi)sets will be counted as equal if 
and only if they have the same elements with identical multiplicities.
Let $\Bbb C$ and $\Bbb R$ denote the fields of complex and real 
numbers, respectively. For a given positive integer $N$, let ${\mathcal M}_N$ 
denote the collections of all multisets of the form 
   $$
\Phi_N=\{z_1,z_2,\ldots ,z_N:\, z_1,z_2,\ldots ,z_N\in\Bbb C\}.\leqno (1)
   $$
Furthermore, denote by  ${\mathcal M}$  the set consisting 
of all multisets of the form (1), i.e.,  
   $$
{\mathcal M}=\bigcup_{N=1}^{\infty}{\mathcal M}_N. 
  $$

Following Definition 1.2 from \cite{m8} 
(also see Definition 1.1 in \cite{m7}), the random variable $X(m, \Phi_N)$
was generalized in \cite{m1} as follows.
  
  \begin{definition} (\cite[Definition 1.1]{m1})
Let $N$ and $m$  be arbitrary nonnegative integers 
such that  $1\le m\le N$.
For given not necessarily distinct complex numbers
$z_1,z_2,\ldots ,z_N$, let  
$\Phi_N \in {\mathcal M}_N$ be a multiset defined by (1).
 Define the discrete complex-valued random variable $X(m,\Phi_N)$ as
  \begin{eqnarray*}
 && \mathrm{Prob}\left(X(m,\Phi_N)
 = \sum_{i=1}^m z_{n_i}\right)\\
(2)\qquad\qquad &= &\frac{1}{{N\choose m}}\cdot \big|\{\{t_1,t_2,\ldots,t_m\}
\subset\{1,2,\ldots,N\}: 
\sum_{i=1}^m z_{t_i}=\sum_{i=1}^mz_{n_i}\} \big|
\quad\qquad\\
& =&:\frac{q(n_1,n_2,\ldots, n_m)}{{N\choose m}},
  \end{eqnarray*}
  where $\{n_1,n_2,\ldots, n_m\}$ is an arbitrary fixed
 subset of $\{1,2,\ldots,N\}$ such that $1\le n_1<n_2<\cdots <n_m\le N$;
moreover, $q(n_1,n_2,\ldots, n_m)$ is the cardinality of a collection
 of all subsets
$\{t_1,t_2,\ldots,t_m\}$ of the set $\{1,2,\ldots,N\}$ such that 
$\sum_{i=1}^m z_{t_i}=\sum_{i=1}^mz_{n_i}$.
  \end{definition}

Notice that the above definition is correct  taking into account that there 
are ${N\choose m}$ index sets $T\subset \{1,2,\ldots,N \}$ with 
$m$ elements. Moreover, a very  short, but not 
strongly exact version of Definition 1.1  is given as follows 
(cf. \cite[Definition 1.2']{m8}).
   \vspace{2mm}

  \noindent {\bf Definition 1.1'.}
Let $N$ and $m$  be arbitrary nonnegative integers 
such that  $1\le m\le N$.
For given not necessarily distinct complex numbers
$z_1,z_2,\ldots ,z_N$, let  
$\Phi_N \in {\mathcal M}_N$ be a multiset defined by (1).
Choose a random subset  $S$ of size $m$ (the so-called $m$-element 
subset) without replacement from the set 
$\{1,2,\ldots, N\}$. Then the 
 complex-valued discrete random variable $X(m,\Phi_N)$ is defined as a sum
  $$
X(m,\Phi_N)=\sum_{n\in S}z_n.
  $$
  \vspace{1mm}

It was proved in \cite{m1}  the following result 
(cf. \cite[proof of Theorem 2.1]{m8}  as a particular case).

 \begin{theorem}$($\cite[the expressions (3) and (5) of Theorem 1.2]{m1}$).$
Let $N$ and $m$ be positive integers such that
  $N\ge 2$ and  $1\le m\le N$. Let $\Phi_N=\{z_1,z_2,\ldots ,z_N\}$ be any 
multiset with $z_1,z_2,\ldots ,z_N\in \Bbb C$. 
 Then the expected value of the random variable $X(m,\Phi_N)$  
from Definition $1.1$ and the second moment of the random variable 
$|X(m,\Phi_N)|$  are respectively given  by
 by
  $$
\Bbb E[X(m,\Phi_N)]=\frac{m}{N}\sum_{i=1}^Nz_i,\leqno(3) 
   $$
and
      $$
   \Bbb E[|X(m,\Phi_N)|^2]=
\frac{m}{N(N-1)}\left((N-m) \sum_{i=1}^N |z_i|^2 + 
 (m-1)\big| \sum_{i=1}^N z_i \big|^2\right).\leqno(4)
    $$
 \end{theorem}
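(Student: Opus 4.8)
The plan is to compute both moments directly from the short description in Definition 1.1', writing $X(m,\Phi_N)=\sum_{n=1}^N z_n \mathbf{1}_n$, where $\mathbf{1}_n$ is the indicator of the event that index $n$ belongs to the random $m$-subset $S$. This linearization over indicator variables is the crux: since $S$ is a uniformly chosen $m$-subset of $\{1,2,\ldots,N\}$, each $\mathbf{1}_n$ is a Bernoulli variable with $\mathbb{E}[\mathbf{1}_n]=\mathrm{Prob}(n\in S)=m/N$ by symmetry, and for distinct $i\ne j$ we have $\mathbb{E}[\mathbf{1}_i\mathbf{1}_j]=\mathrm{Prob}(i,j\in S)=\binom{N-2}{m-2}/\binom{N}{m}=\frac{m(m-1)}{N(N-1)}$. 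The expected value (3) then follows immediately by linearity of expectation: $\mathbb{E}[X(m,\Phi_N)]=\sum_{n=1}^N z_n\,\mathbb{E}[\mathbf{1}_n]=\frac{m}{N}\sum_{n=1}^N z_n$.

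For the second moment I would start from $|X(m,\Phi_N)|^2 = X\overline{X}$, expanding as
\[
|X(m,\Phi_N)|^2=\sum_{i=1}^N\sum_{j=1}^N z_i\overline{z_j}\,\mathbf{1}_i\mathbf{1}_j.
\]
Taking expectations and splitting the double sum into the diagonal terms $i=j$ (where $\mathbf{1}_i^2=\mathbf{1}_i$, so the expectation is $m/N$) and the off-diagonal terms $i\ne j$ (with expectation $\frac{m(m-1)}{N(N-1)}$) gives
\[
\mathbb{E}[|X(m,\Phi_N)|^2]=\frac{m}{N}\sum_{i=1}^N|z_i|^2+\frac{m(m-1)}{N(N-1)}\sum_{i\ne j}z_i\overline{z_j}.
\]
The remaining step is to express $\sum_{i\ne j}z_i\overline{z_j}$ in terms of the two quantities appearing in (4). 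The natural identity is $\bigl|\sum_{i=1}^N z_i\bigr|^2=\sum_{i=1}^N|z_i|^2+\sum_{i\ne j}z_i\overline{z_j}$, so that $\sum_{i\ne j}z_i\overline{z_j}=\bigl|\sum_{i=1}^N z_i\bigr|^2-\sum_{i=1}^N|z_i|^2$.

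Substituting this back and collecting the coefficients of $\sum_{i=1}^N|z_i|^2$ and of $\bigl|\sum_{i=1}^N z_i\bigr|^2$ is then routine algebra: the coefficient of $\bigl|\sum z_i\bigr|^2$ is $\frac{m(m-1)}{N(N-1)}$, matching the $(m-1)$ term in (4) after factoring out $\frac{m}{N(N-1)}$, while the coefficient of $\sum|z_i|^2$ becomes $\frac{m}{N}-\frac{m(m-1)}{N(N-1)}=\frac{m(N-m)}{N(N-1)}$, matching the $(N-m)$ term. I expect the main obstacle to be purely bookkeeping: one must handle the complex conjugates carefully (the cross terms $z_i\overline{z_j}$ are not symmetric individually, though their sum over $i\ne j$ is real and equals $\bigl|\sum z_i\bigr|^2-\sum|z_i|^2$), and verify that the covariance computation $\mathrm{Prob}(i,j\in S)=\frac{m(m-1)}{N(N-1)}$ is correctly normalized for sampling without replacement. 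No deep idea is required beyond the indicator decomposition and the symmetry of uniform subset sampling.
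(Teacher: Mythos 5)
Your proof is correct, but it is worth noting that the paper never proves Theorem 1.2 itself: the expressions (3) and (4) are imported from \cite{m1}, and the only in-paper model of the intended argument is the proof of Theorem 1.3, which computes the third moment as $\frac{1}{\binom{N}{m}}\sum_{\{i_1,\ldots,i_m\}}(z_{i_1}+\cdots+z_{i_m})^3$, expands via the multinomial formula, and counts that each $z_s$ occurs in $\binom{N-1}{m-1}$ of the $m$-subsets and each pair in $\binom{N-2}{m-2}$ of them, before simplifying with $\binom{N}{m}=\frac{N}{m}\binom{N-1}{m-1}$ and $\binom{N}{m}=\frac{N(N-1)}{m(m-1)}\binom{N-2}{m-2}$. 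Your indicator decomposition $X(m,\Phi_N)=\sum_{n=1}^N z_n\mathbf{1}_n$ carries exactly the same combinatorial content --- $\Bbb E[\mathbf{1}_n]=\binom{N-1}{m-1}/\binom{N}{m}=m/N$ and $\Bbb E[\mathbf{1}_i\mathbf{1}_j]=\binom{N-2}{m-2}/\binom{N}{m}=\frac{m(m-1)}{N(N-1)}$ --- but reorganized through linearity of expectation, so you never expand the full sum over subsets or invoke the multinomial formula. What this buys: lighter bookkeeping, transparent handling of conjugates via $|X|^2=X\overline{X}$ together with the regrouping $\sum_{i\ne j}z_i\overline{z_j}=\bigl|\sum_{i}z_i\bigr|^2-\sum_i|z_i|^2$ (the analogue of the rearrangement the paper performs in display (7) for the cubic case), and automatic treatment of the edge case $m=1$, where $\binom{N-2}{m-2}=0$ makes the pair term vanish without any special pleading. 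What the expansion-and-counting route buys is a uniform template for higher moments, where one must track every monomial type with its multinomial multiplicity, as in Theorem 1.3; your method extends there as well, at the cost of tracking products of three or more indicators. Both derivations are rigorous, and your coefficient check $\frac{m}{N}-\frac{m(m-1)}{N(N-1)}=\frac{m(N-m)}{N(N-1)}$ correctly recovers (4).
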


Notice that in the case when $X(m,\Phi_N)$ is a 
real-valued random variable (i.e., if in Definition 1.1 
$z_1,z_2,\ldots,z_n$ are real numbers), then the following  expression for 
the third moment $\Bbb E[(X(m,\Phi_N))^3]$ of $X(m,\Phi_N)$ can be
proved similarly as the above expression (4) given in \cite[Theorem 2.1]{m1}.

 \begin{theorem} 
Let $N$ and $m$ be positive integers such that
  $N\ge 2$ and let $1\le m\le N$. Suppose that  $z_1,z_2\ldots,z_N$ 
are not necessarily distinct real numbers, and let $X(m,\Phi_N)$
be the real-valued random variable from Definition $1.1$.  Then 

\begin{eqnarray*}
&&   \Bbb E[(X(m,\Phi_N))^3]\\ 
(5)\qquad\qquad &&=
\frac{m}{N(N-1)}\left((N+2-3m) \sum_{i=1}^N
 z_i^3 + 3(m-1) (\sum_{i=1}^N z_i^2)
(\sum_{i=1}^N z_i)\right).
    \end{eqnarray*}

 \end{theorem}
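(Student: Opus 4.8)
The plan is to run the same moment computation that underlies the second moment (4), now carried to third order. I would encode the random $m$-subset $S$ through Bernoulli indicators $\xi_i=1$ if $i\in S$ and $\xi_i=0$ otherwise, so that $X(m,\Phi_N)=\sum_{i=1}^N z_i\xi_i$. Cubing and taking expectations gives
\[
\Bbb E[(X(m,\Phi_N))^3]=\sum_{i=1}^N\sum_{j=1}^N\sum_{k=1}^N z_iz_jz_k\,\Bbb E[\xi_i\xi_j\xi_k],
\]
so the whole problem reduces to the joint moments of the indicators, exactly as in the derivation of (4).

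Since sampling is without replacement and each $\xi_i\in\{0,1\}$, the only inputs needed are $\Bbb E[\xi_i]=m/N$ and, for pairwise distinct indices, $\Bbb E[\xi_i\xi_j]=\frac{m(m-1)}{N(N-1)}$ and $\Bbb E[\xi_i\xi_j\xi_k]=\frac{m(m-1)(m-2)}{N(N-1)(N-2)}$; these are the hypergeometric probabilities $\binom{N-r}{m-r}/\binom{N}{m}$ that a fixed $r$-set is contained in $S$. I would then split the triple sum according to the coincidence pattern of $(i,j,k)$: the diagonal $i=j=k$ (using $\xi_i^3=\xi_i$), the three symmetric patterns with exactly two equal indices (using $\xi_i^2=\xi_i$), and the fully distinct triples. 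Writing $P_r=\sum_{i=1}^N z_i^r$, these three blocks contribute, respectively, $\frac{m}{N}P_3$, then $3\cdot\frac{m(m-1)}{N(N-1)}\big(P_1P_2-P_3\big)$, and finally $\frac{m(m-1)(m-2)}{N(N-1)(N-2)}\big(P_1^3-3P_1P_2+2P_3\big)$.

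The remaining step is purely algebraic: substitute the Newton-type reductions $\sum_{a\ne b}z_a^2z_b=P_1P_2-P_3$ and $\sum_{\mathrm{distinct}}z_iz_jz_k=P_1^3-3P_1P_2+2P_3$, put the three contributions over the common denominator $N(N-1)(N-2)$, collect the coefficients of $P_3$, of $P_1P_2$, and of $P_1^3$, and compare with the claimed right-hand side of (5).

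I expect the main obstacle to be the fully-distinct block, which has no analogue in the second-moment computation (4). Its hypergeometric weight $\frac{m(m-1)(m-2)}{N(N-1)(N-2)}$ and its Newton expansion $P_1^3-3P_1P_2+2P_3$ must be tracked exactly, and this block is the \emph{only} source of a cubic term $P_1^3$. Reconciling that term with the stated right-hand side of (5), whose terms involve only $P_3=\sum z_i^3$ and $P_1P_2=(\sum z_i^2)(\sum z_i)$, is the crux of the matter: one should verify whether the $P_1^3$ contribution genuinely cancels (it does so automatically whenever the factor $m-2$ vanishes, i.e. for $m\le 2$) or whether it survives for general $m\ge 3$, since the validity of the identity as written hinges entirely on the coefficient bookkeeping in this case.
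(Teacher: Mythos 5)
Your computation is correct as far as it goes, but the question you leave open at the end resolves \emph{against} the stated theorem: the $P_1^3$ block does not cancel, and (5) is false in general for $m\ge 3$. With your notation $P_r=\sum_{i=1}^Nz_i^r$ and $p_r=\binom{N-r}{m-r}\big/\binom{N}{m}=\frac{m(m-1)\cdots(m-r+1)}{N(N-1)\cdots(N-r+1)}$, summing your three blocks gives, for $N\ge 3$,
\begin{equation*}
\begin{split}
\Bbb E[(X(m,\Phi_N))^3]
=\frac{m}{N(N-1)(N-2)}\Big(&\big((N-1)(N-2)-3(m-1)(N-2)+2(m-1)(m-2)\big)P_3\\
&+3(m-1)(N-m)\,P_1P_2+(m-1)(m-2)\,P_1^3\Big),
\end{split}
\end{equation*}
whose $P_1^3$-coefficient $(m-1)(m-2)$ is nonzero for $m\ge 3$. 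Since the right-hand side of (5) is exactly $(p_1-3p_2)P_3+3p_2P_1P_2$, the discrepancy between (5) and the true third moment is precisely $p_3\big(P_1^3-3P_1P_2+2P_3\big)=\frac{6\binom{N-3}{m-3}}{\binom{N}{m}}\sum_{i<j<k}z_iz_jz_k$, so (5) fails whenever $m\ge3$ and $\sum_{i<j<k}z_iz_jz_k\ne0$. Concretely, $N=m=3$, $z_1=z_2=z_3=1$ gives $X\equiv 3$, hence $\Bbb E[X^3]=27$, while (5) yields $\frac{3}{6}\big((3+2-9)\cdot 3+3\cdot 2\cdot 3\cdot 3\big)=21$. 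For $m\le 2$ your formula does reduce to (5), because then $(m-1)(m-2)=0$ and $(N-1)(N-2)-3(m-1)(N-2)=(N-2)(N+2-3m)$. So the one genuine gap in your attempt is that it stops at a hedge: carried one step further, it refutes the theorem for $m\ge3$ rather than proving it.

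The comparison with the paper is instructive, because the paper's own proof breaks at exactly the spot you singled out as the crux. In the expansion (7) of $(z_{i_1}+\cdots+z_{i_m})^3$ only the terms $z_i^3$ (weight $\binom{N-1}{m-1}$) and $3z_i^2z_n$ (weight $\binom{N-2}{m-2}$) are retained; the multinomial expansion also contains $6\sum_{i<j<k}z_iz_jz_k$, each such product occurring $\binom{N-3}{m-3}$ times among the $m$-subsets, and this term --- your fully-distinct block --- is silently dropped. (There is also a minor slip in (7): the identity used there requires the ordered sum $\sum_{k\ne n}z_k^2z_n$, not $\sum_{k<n}$, to produce $\big(\sum_iz_i^2\big)\big(\sum_iz_i\big)$; but that is notational, whereas the missing triple term is substantive.) The error propagates to the cubic analogue, Identity 2.19, which is derived from (5). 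Your indicator-variable route is sound, cleaner than the paper's subset counting, and completed honestly it proves the corrected formula displayed above rather than the theorem as stated.
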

 \begin{proof}[Proof of Theorem $1.3$]
By Definition 1.1 and by the definition of the third moment 
$\Bbb E[(X(m,\Phi_N))^3]$, we find that
    $$
\Bbb E[(X(m,\Phi_N))^3]=\frac{1}{{N\choose m}}\sum_{\{i_1,i_2,\ldots,i_m\} 
\subset \{1,2,\ldots,N\}}(z_{i_1}+z_{i_2}+\cdots+z_{i_m})^3,\leqno(6)
   $$
where the summation ranges over all subsets $\{i_1,i_2,\ldots,i_m\}$
of $\{1,2,\ldots,N\}$ with $1\le i_1<i_2<\cdots <i_m\le N$.
Since for any fixed  $s\in\{1,2,\ldots,N\}$,  $z_{s}$  occurs 
exactly ${N-1\choose m-1}$ times  in the expanded  sum on the right hand side 
of (6), using the multinomial formula for 
$(x_1+x_2+\cdots +x_s)^3$ (see, e.g., \cite[p. 28]{st}), from (6)  we obtain
     \begin{equation*}\begin{split}
&\Bbb E[(X(m,\Phi_N))^3]=\frac{1}{{N\choose m}}\left({N-1\choose m-1}
\sum_{i=1}^Nz_i^3
+ 3{N-2\choose m-2}\sum_{1\le i<n\le N}z_i^2z_n\right)\\
(7)\quad =&
\frac{1}{{N\choose m}}\left(\Big({N-1\choose m-1}-3{N-2\choose m-2}\Big) 
\sum_{i=1}^Nz_i^3\qquad\qquad\right.\qquad\qquad\qquad\qquad\\
& \left. +3{N-2\choose m-2}\Big(\sum_{i=1}^Nz_i^3+\sum_{1\le k<n\le N}z_k^2
z_n\Big)\right)\\
=&\frac{1}{{N\choose m}}\left(\Big({N-1\choose m-1}-3{N-2\choose m-2}\Big) 
\sum_{i=1}^Nz_i^3  +3{N-2\choose m-2}\sum_{1\le k<n\le N}^Nz_k^2z_n\right).\\
=&\frac{1}{{N\choose m}}\left(\Big({N-1\choose m-1}-3{N-2\choose m-2}\Big) 
\sum_{i=1}^Nz_i^3  +3{N-2\choose m-2}\big(\sum_{i=1}^Nz_i^2\big)
\big(\sum_{i=1}^Nz_i\big)\right).    
   \end{split}\end{equation*}
Then replacing the identities  
${N\choose m}=\frac{N}{m}{N-1\choose m-1}$ and 
${N\choose m}=\frac{N(N-1)}{m(m-1)}{N-2\choose m-2}$ into (7),
a routine calculation yields the expression (5).  
  \end{proof}

It was indicated in \cite[Section 3]{m8} that for some particular cases 
of sets $\Phi(N)$ (given by (1)) and some values $m$, the expressions 
(3) and   (4)  concerning the associated random variables $X(m,\Phi_N)$ 
yield some combinatorial identities. For a comprehensive  list 
of  combinatorial identities see \cite{go} and \cite{go2} 
(also see \cite{ri} and \cite[Chapter 5]{gkp}).
Motivated by this fact, by using some other particular cases 
of the random variables $X(m,\Phi_N)$ from Definition 1.1,  
in the next section  we deduce some new and some known  
combinatorial identities which can be considered as 
generalizations of Chu-Vandermonde identity. Notice that 
Chu-Vandermonde identity is often called  Vandermonde's identity
or sometimes Vandermonde's formula.   

\section{Chu-Vandermonde-type identities and their proofs}

We start with the following identity. 

   \begin{identity}
Let $n_1,n_2,\ldots ,n_s$ be arbitrary positive 
integers and le $z_1,z_2,\ldots ,z_s$ be arbitrary complex numbers
    $(s\ge 2)$. If $m$ is a a positive integer such that 
$m\le \sum_{i=1}^sn_i$, then
   \begin{eqnarray*}
&& \sum_{\sum_{i=1}^sk_i=m\atop k_1\le n_1,k_2\le n_2,\ldots, 
k_s\le n_s}{n_1\choose k_1}
{n_2\choose k_2}\cdots {n_s\choose k_s}(k_1z_1+k_2z_2+\cdots+k_sz_s)\\
(8)\qquad\qquad &=&{\sum_{i=1}^s n_i\choose m}\frac{m(\sum_{i=1}^s n_iz_i)}{\sum_{i=1}^s n_i}
 ,\qquad\qquad
   \end{eqnarray*}
 where the summation ranges over all nonnegative integers 
$k_i$  $(i=1,2,\ldots , s)$ such that $k_1\le n_1,k_2\le n_2,\ldots, 
k_s\le n_s$ and  $\sum_{i=1}^sk_i=m$.
  \end{identity}

\begin{proof} Put $\sum_{i=1}^sn_i=N$ and consider the multiset $\Phi_N$  
defined by
    $$
\Phi_N =\{\underbrace{z_1,\ldots,z_1}_{n_1},
\underbrace{z_2,\ldots,z_2}_{n_2},\ldots, \underbrace{z_s,\ldots,z_s}_{n_s}\}.
  $$
Now consider the random variable  $X(m,\Phi_N)$ given by Definition 1.1.
Then by the expression (3) of Theorem 1.2, we have 
  $$
\Bbb E[X(m,N)]=\frac{m(\sum_{i=1}^s n_iz_i)}{\sum_{i=1}^s n_i}.\leqno(9)
   $$  
On the other hand, for each $s$-tuple $(k_1,k_2,\ldots,k_s)$ of
nonnegative integers $k_1,k_2,\ldots,k_s$ such that $k_1\le n_1,k_2\le n_2,\ldots, 
k_s\le n_s$ and  $\sum_{i=1}^sk_i=m$, by (3), we get
   $$
  \mathrm{Prob}\left(X(m,\Phi_N) = \sum_{i=1}^s k_iz_i\right)
=  \frac{1}{{\sum_{i=1}^s n_i\choose m}}
{n_1\choose k_1}{n_2\choose k_2}\cdots {n_s\choose k_s}.\leqno(10)
   $$
Then by definition of expectation of a complex-valued discrete random variable,
from (10) we find that 
   \begin{eqnarray*}
 &&\Bbb E[X(m,N)]\\
(11)\quad &=&\frac{1}{{\sum_{i=1}^s n_i\choose m}}
\sum_{\sum_{i=1}^sk_i=m\atop k_1\le n_1,k_2\le n_2,\ldots, 
k_s\le n_s}{n_1\choose k_1}
{n_2\choose k_2}\cdots {n_s\choose k_s}(k_1z_1+k_2z_2+\cdots+k_sz_s),
  \end{eqnarray*}
where the summation ranges over all nonnegative integers 
$k_i$  ($i=1,2,\ldots , s)$ such that $k_1\le n_1,k_2\le n_2,\ldots, 
k_s\le n_s$ and  $\sum_{i=1}^sk_i=m$. 

Finally, comparing the equalities (9) and (11), we immediately obtain (8). 
 \end{proof}

\begin{remark} Quite recently, by using functional equations of 
the generating function of certain class of polynomials, a new 
 Chu-Vandermonde-type identity (Vandermonde type convolution 
formula) is derived  in \cite[Theorem 5.4 of Section 5]{ks}. 
As a special  case of this result is the 
following identity \cite[Corollary 5.5 of Section 5]{ks}:
      $$
\sum_{v_1=0}^n{k_1+v_1-1\choose v_1}{k_2+n-v_1-1 \choose n-v_1}=
{k_1+k_2+n-1\choose n},
     $$   
where $k_1\ge 1$, $k_2\ge 1$ and $n$ are nonnegative integers.
Another generalization of Chu-Vandermonde identity was recently 
given in \cite{kr}. Moreover, two diferent interpretations 
of this identity are recently  considered in \cite{so}: 
as an identity for polynomials, and as an identity for infinite matrices.  
   \end{remark}

  \begin{remark}
If $P_{s-1}(z)=\sum_{i=1}^sn_iz^{i-1}$ is a complex polynomial
of the variable  $z$ of degree $s-1$  with integer coefficients 
$n_1,n_2,\ldots ,n_s\ge 0$ ($n_s\not= 0$), then taking $z_i=z^{i-1}$   
($i=1,2,\ldots,s$) into the identity (8), it becomes 
   \begin{equation*}\begin{split}
& \sum_{\sum_{i=1}^sk_i= m\atop k_1\le n_1,k_2\le n_2,\ldots, 
k_s\le n_s}{n_1\choose k_1}
{n_2\choose k_2}\cdots {n_s\choose k_s}(k_1+k_2z+\cdots+k_sz^{s-1}) \\
& =\frac{m}{\sum_{i=1}^s n_i} {\sum_{i=1}^s n_i\choose m}P_{s-1}(z),
    \end{split}\end{equation*}
 where the summation ranges over all nonnegative integers 
$k_i$  ($i=1,2,\ldots , s)$ such that $k_1\le n_1,k_2\le n_2,\ldots, 
k_s\le n_s$ and  $\sum_{i=1}^sk_i=m$.
   \end{remark}

\begin{remark} As usually, if we use the convention that  ${a\choose b}=0$ for 
all nonnegative integers $a$ and $b$ such that $b>a$, then
the conditions $k_1\le n_1,k_2\le n_2,\ldots,k_s\le n_s$  
which appear under the first sum $\sum\cdot$ of (8) can be omitted.
 \end{remark}

A particular case of  Identity 2.1 is Identity 2.5 given below
which is  a well known ``multinomial''  generalization of the 
{Vandermonde identity} (often called  
{\it Vandermonde convolution formula} or {\it Chu-Vandermonde convolution}) 
(see, e.g., \cite{ri}).

\begin{identity} Let $n_1,n_2,\ldots ,n_s$  $(s\ge 2)$ be arbitrary positive 
integers. If $m$ is a positive integer $m$ such that 
$m\le \sum_{i=1}^sn_i$, then
   $$
\sum_{\sum_{i=1}^sk_i=m\atop k_1\le n_1,k_2\le n_2,\ldots, 
k_s\le n_s}{n_1\choose k_1}{n_2\choose k_2}\cdots 
{n_s\choose k_s}={\sum_{i=1}^s n_i\choose m},\leqno(12)
   $$  
where the summation ranges over all nonnegative integers 
$k_i$  $(i=1,2,\ldots , s)$ such that $k_1\le n_1,k_2\le n_2,\ldots, 
k_s\le n_s$ and  $\sum_{i=1}^sk_i=m$.  
  \end{identity}
  
\begin{proof}
Taking  $z_1=z_2=\cdots =z_s=1$ into equality (8), we immediately obtain (12).
\end{proof}
   
\begin{remark} There are well known  algebraic  and  combinatorial proofs
 of the  identity (12) (see, e.g., \cite{wi}).  Notice also that 
 for $s=2$ the identity (12) with $k_1=k$, $n_1=a$
and $n_2=b$  simplifies to Chu-Vandermonde identity given by 
(see, e.g., \cite[p. 67]{aa})
     $$
\sum_{k=0}^m{a\choose k}{b\choose m-k}={a+b\choose m},\leqno(13)
    $$
which also holds for any complex numbers $a$ and $b$.  
Notice that
the identity (13) is named after A.T. Vandermonde (1772), although it was already 
known in 1303 by the Chinese mathematician Zhu Shijie (Chu Shih-Chieh)
(see \cite[pp. 59--60]{ask} for the history).
This identity plays an important role in Combinatorics,
Combinatorial Number Theory and Probability Theory (\cite{gkp}, 
\cite{gs} and \cite{ri}). As indicated in \cite[p. 8]{ri},
 Vandermonde convolution formula 
is perhaps the most widely used combinatorial identity. 
In the literature there are many proofs of this identity and its 
several generalizations. A  proof given in \cite{vm} was established by giving 
probabibilstic interpretations to the summands.    
  \end{remark}

Taking $s=2$ and $z_1/z_2=z$ ($z_2\not= 0$) into (8), it 
simplifies to the  following Vandermonde-type convolution formula.

\begin{identity}
Let $n_1$ and $n_2$ be arbitrary positive 
integers and let $z$ be  any  complex numbers. 
If $m$ is a positive integer  such that 
$m\le n_1+n_2$, then
     $$
  \sum_{k=0}^m{n_1\choose k}{n_2\choose m-k}(kz+(m-k))
={n_1+n_2\choose m}\frac{2m(n_1z+n_2)}{n_1+n_2}.\leqno(14)
   $$
\end{identity}

\begin{remark} Observe that taking $z=1$, $n_1=a$ and 
$n_2=b$ into (14), it immediately reduces to 
Chu-Vandermonde identity given by (13). 
  \end{remark}

Substituting $s=3$ $z_1/z_3=z$ and $z_2/z_3=w$ $(z_3\not= 0)$  into (8), it 
reduces to the  following Vandermonde-type convolution formula.

\begin{identity}
Let $n_1$, $n_2$ and $n_3$ be arbitrary positive 
integers and let $z$ and $w$ be  arbitrary  complex numbers. 
If $m$ is a positive integer such that 
$m\le n_1+n_2+n_3$, then
  \begin{eqnarray*}
 &&  \sum_{0\le k_1+k_2\le m\atop k_1\ge 0,k_2\ge 0}{n_1\choose k_1}
{n_2\choose k_2} {n_3\choose m-k_1-k_2}(k_1z+k_2w+(m-k_1-k_2))\qquad\\
(15)\qquad &=&
{n_1+n_2+n_3\choose m}\frac{2m(n_1z+n_2w+n_3)}{n_1+n_2+n_3}.\qquad\qquad
   \end{eqnarray*}
\end{identity}

Taking $z=w=1/2$ into (15), we obtain the following identity.

\begin{identity}
Let $n_1$, $n_2$ and $n_3$ be arbitrary positive 
integers and let $z_1$ and $z_2$ be  arbitrary  complex numbers. 
If $m$ is a positive integer such that 
$m\le n_1+n_2+n_3$, then
  \begin{eqnarray*}
 &&  \sum_{0\le k_1+k_2\le m\atop k_1\ge 0,k_2\ge 0}(2m-k_1-k_2){n_1\choose k_1}
{n_2\choose k_2} {n_3\choose m-k_1-k_2}\\
(16)\qquad &=&
\frac{2m(n_1+n_2+2n_3)}{n_1+n_2+n_3}{n_1+n_2+n_3\choose m}.\qquad\qquad\qquad\qquad\qquad\qquad\qquad
   \end{eqnarray*}
\end{identity}

Another special case of  Identity 2.1 is given as follows.

\begin{identity} 
Let $s$ and $l$ be arbitrary positive integers, and let 
$m$ be a positive integer such that $m\le sl$. Then
 $$
\sum_{\sum_{i=1}^sk_i=m \atop k_1\le l,k_2\le l,\ldots, 
k_s\le l}{l\choose k_1}
{l\choose k_2}\cdots {l\choose k_s}(k_1+2k_2+\cdots+sk_s)=\frac{m(s+1)}{2}
 {sl\choose m},\leqno(17)
   $$
where the summation ranges over all nonnegative integers 
$k_i$  $(i=1,2,\ldots , s)$ such that $\sum_{i=1}^sk_i=m$ and  
$k_1\le l,k_2\le l,\ldots,k_s\le l$.
  \end{identity}

  \begin{proof}
Substituting  $z_i=i$ for all $i=1,2,\ldots ,s$ and $n_1=n_2=\cdots =n_s=l$
into (8), it immediately reduces to the equality (17). 
 \end{proof}

As a consequence of Identity 2.11, we obtain the following 
``supercongruence'' closely related to the remarkable 
{\it Wolstenhlme's theorem} which asserts that
  $$
{2p-1\choose p-1}\equiv 1\pmod{p^3}
 $$ 
for all primes $p\ge 3$ (\cite{wo}; also see \cite[p. 3]{m9} and \cite{m2}).

\begin{congruence} 
Let $p\ge 5$ be a prime. Then for each positive integer $s$ there holds 
  $$
\sum_{\sum_{i=1}^sk_i=p\atop k_1,k_2,\ldots, k_s\ge 0}{p\choose k_1}
{p\choose k_2}\cdots {p\choose k_s}(k_1+2k_2+\cdots+sk_s)
\equiv \frac{s(s+1)p}{2}\pmod{p^4}.\leqno(18)
    $$
In particular,
 $$
\sum_{\sum_{i=1}^sk_i=p\atop k_1,k_2,\ldots, k_s\ge 0}{p\choose k_1}
{p\choose k_2}\cdots {p\choose k_s}(k_1+2k_2+\cdots+sk_s)\equiv 0 \pmod{p}.
\leqno(19)
    $$
    \end{congruence}
\begin{proof} 
If we substitute $l=m=p$ into equality (17), then its right hand
side is equal to $\frac{(s+1)p}{2}{sp\choose p}$. Since by 
Glaisher's congruence \cite[p. 21]{gl} 
(or more general, Ljunggren's congruence (\cite{lj}; also see 
\cite[the congruences (15), p. 7 and (35) and (36), p. 11]{m9}; 
cf. \cite[Section 3.3]{m3} and \cite{m10}), for any prime $p\ge 5$ and 
a positive integer $s$, we have 
   $$
{sp\choose p}\equiv s\pmod{p^3}, 
   $$   
and hence, 
   $$
\frac{(s+1)p}{2}{sp\choose p}\equiv \frac{s(s+1)p}{2}\pmod{p^4}.
  $$
Substituting the  above congruence into (17) with $l=m=p$,
we immediately obtain the congruence (18). Finally, reducing the modulus 
in (18) to $(\bmod{\, p})$, implies (19).   
  \end{proof}

Let us recall that a prime $p$ is said to be a {\it Wolstenholme prime} 
(see, e.g., \cite{mc},   \cite[Section 7]{m9} and \cite{m11}; this is 
Sloane's sequence A088164 from \cite{sl}) if it satisfies the congruence 
  $$
{2p-1\choose p-1}\equiv 1\pmod{p^4}.
  $$  
It is well known (see \cite[p. 21]{gl}, \cite[p. 323]{gl2} and 
\cite[p. 14]{m9}) that $p$ is a Wolstenholme prime if and only if $p$ divides
the numerator of the {\it Bernoulli number} $B_{p-3}$.
Moreover, these primes together with the primes $p$ such that 
the  Euler number $E_{p-3}$ is divisible by $p$, are closely
related to  the first case of {\it Fermat Last Theorem} 
(see \cite{va} and \cite{m3}). It can be shown that for any Wolstenholme 
prime, the congruence (18) holds modulo $p^5$, i.e., we have the following
assertion. 
\begin{congruence} 
Let $p$ be a Wolstenholme prime. Then for each positive integer $s$ 
there holds 
  $$
\sum_{\sum_{i=1}^sk_i=p\atop k_1,k_2,\ldots, k_s\ge 0}{p\choose k_1}
{p\choose k_2}\cdots {p\choose k_s}(k_1+2k_2+\cdots+sk_s)\equiv \frac{s(s+1)p}{2}
\pmod{p^5}.\leqno(20)
    $$
    \end{congruence}
\begin{proof} Notice that by a result of Glaisher (\cite[p. 21]{gl}, 
\cite[p. 323]{gl2}; also see \cite[the conguence (15), p. 7]{m9}
and the  {\it Jacobsthal's congruence} \cite{lj}),
for any Wolstenholme prime $p$,
  $$
{sp\choose p}\equiv s\pmod{p^4}. 
   $$
Then the rest of the  proof is quite similar to that of the previous 
Congruence 2.12, and hence may be omitted.
   \end{proof}

Another consequence of Identity 2.1 is given as follows.
  \begin{identity}
Let $n\ge 2$ and $s$  be  fixed positive integers and let 
$k=k_1+k_2n+\cdots +  k_sn^{s-1}$ be the base $n$ 
representation of a positive integer $k<n^s$ 
$($with $0\le k_1,k_2,\ldots, k_s\le n-1$$)$. 
If $m$ is a positive integer such that $m\le s(n-1)$,  then
  \begin{equation*}\begin{split}
& \sum_{\sum_{i=1}^sk_i= m\atop k_1\le n-1,k_2\le n-1,\ldots, 
k_s\le n-1}{n-1\choose k_1}
{n-1\choose k_2}\cdots {n-1\choose k_s}(k_1+k_2n+\cdots+k_sn^{s-1}) \\
(21) & =(n^s-1){s(n-1)-1\choose m-1}.
    \end{split}\end{equation*}
  \end{identity}
 \begin{proof}
Setting $n_1=n_2=\cdots =n_s=n-1$ and $z_i=n^{i-1}$ 
($i=1,2,\ldots, s$) into the identity (8) and using the identity 
${s(n-1)\choose m}=\frac{s(n-1)}{m}{s(n-1)-1\choose m-1}$, immediately gives 
the identity (21).   
\end{proof}

The binary case of Identity 2.14 can be reformulated as follows. 

  \begin{corollary}
Let $s$ and $m$  be  positive integers 
such that $m\le s$.  Then the sum of all positive integers 
less than $2^s$ whose binary representation contains exactly 
$m$  $1'$s is equal to $(2^s-1){s-1\choose m-1}$ 
$($as usually, it is assumed that ${0\choose 0}=1$$)$.
 \end{corollary}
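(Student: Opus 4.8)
The plan is to obtain Corollary 2.15 as the immediate specialization of Identity 2.14 to the binary base $n=2$. First I would set $n=2$ in the identity (21). Then $n-1=1$, so each binomial coefficient ${n-1\choose k_i}={1\choose k_i}$ is forced to equal $1$ (the summation already restricts to $0\le k_i\le n-1=1$), and consequently the entire product ${1\choose k_1}{1\choose k_2}\cdots{1\choose k_s}$ equals $1$ on the whole index set. Hence the left-hand side of (21) collapses to a plain sum of the quantities $k_1+k_2\cdot 2+\cdots+k_s\cdot 2^{s-1}$, taken over all $s$-tuples $(k_1,\ldots,k_s)\in\{0,1\}^s$ with $\sum_{i=1}^s k_i=m$.

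The next step is to interpret this sum combinatorially. Each admissible tuple $(k_1,\ldots,k_s)$ with exactly $m$ entries equal to $1$ is precisely the vector of binary digits of the integer $k=k_1+k_2\cdot 2+\cdots+k_s\cdot 2^{s-1}$, and the constraint $\sum_{i=1}^s k_i=m$ says exactly that the binary representation of $k$ contains exactly $m$ ones. Since $m\ge 1$, at least one digit equals $1$, so $k$ is a positive integer, and since $0\le k_i\le 1$ we have $k<2^s$. This correspondence between admissible tuples and such integers is a bijection, so the left-hand side is exactly the sum of all positive integers less than $2^s$ whose binary representation contains exactly $m$ ones. On the right-hand side of (21) the factor $n^s-1$ becomes $2^s-1$ and the coefficient ${s(n-1)-1\choose m-1}$ becomes ${s-1\choose m-1}$, yielding the claimed value $(2^s-1){s-1\choose m-1}$.

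I do not expect a genuine obstacle here, since the argument is a direct substitution into an already established identity; the only points needing a word of care are the edge cases and the matching of hypotheses. The restriction $m\le s$ is exactly the instance $m\le s(n-1)$ of the hypothesis of Identity 2.14 at $n=2$, so no extra range condition is introduced. When $m=s$ the only admissible tuple is $(1,1,\ldots,1)$, corresponding to $k=2^s-1$, and indeed $(2^s-1){s-1\choose s-1}=2^s-1$ confirms the formula in this extreme case, where one reads ${s-1\choose m-1}$ under the stated convention ${0\choose 0}=1$.
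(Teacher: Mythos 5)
Your proposal is correct and follows exactly the paper's own route: specializing Identity 2.14 to $n=2$, noting that each ${1\choose k_i}=1$ so the sum runs over binary digit vectors with exactly $m$ ones, and reading off the right-hand side $(2^s-1){s-1\choose m-1}$. Your explicit verification of the bijection and the edge case $m=s$ merely spells out what the paper leaves implicit.
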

\begin{proof}
Taking $n=2$ into (21), we have 
  $$
\sum_{\sum_{i=1}^sk_i= m\atop k_1,k_2,\ldots, 
k_s\in\{0,1\}}(k_1+2k_2+\cdots+2^{s-1}k_s)=(2^s-1){s-1\choose m-1}.
  $$
The above identity is in fact the assertion of the corollary.  
  \end{proof}

\begin{remark} Note that Corollary 2.15 can be easily proved by 
induction on $s\ge 1$ and also by using a simple counting argument.
  \end{remark}

A quadratic analogue of Identity 2.1 is given as follows.

\begin{identity}
Let $n_1,n_2,\ldots ,n_s$ be arbitrary positive 
integers and let $z_1,z_2,\ldots ,z_s$ be arbitrary complex numbers
    $(s\ge 2)$. If $m$ is a  positive integer  such that 
$2\le m\le \sum_{i=1}^sn_i$, then
   \begin{eqnarray*}
 &&\sum_{\sum_{i=1}^sk_i=m\atop k_1\le n_1,k_2\le n_2,\ldots, 
k_s\le n_s}{n_1\choose k_1}
{n_2\choose k_2}\cdots {n_s\choose k_s}|k_1z_1+k_2z_2+\cdots+k_sz_s|^2\\
(22)\qquad &=&{\sum_{i=1}^s n_i -2\choose m-1}\sum_{i=1}^s n_i|z_i|^2
+{\sum_{i=1}^s n_i -2\choose m-2}|\sum_{i=1}^s n_iz_i|^2,
 \qquad\qquad\qquad
   \end{eqnarray*}
 where the summation ranges over all nonnegative integers 
$k_i$  $(i=1,2,\ldots , s)$ such that $k_1\le n_1,k_2\le n_2,\ldots, 
k_s\le n_s$ and  $\sum_{i=1}^sk_i=m$.
  \end{identity}
  
\begin{proof} Put $\sum_{i=1}^sn_i=N$ and as in the proof of Identity 2.1,
 consider the multiset $\Phi_N$ defined by
    $$
\Phi_N =\{\underbrace{z_1,\ldots,z_1}_{n_1},
\underbrace{z_2,\ldots,z_2}_{n_2},\ldots, \underbrace{z_s,\ldots,z_s}_{n_s}\}.
  $$
Now consider the random variable  $X(m,\Phi_N)$ given by Definition 1.1.
Then by the expression (4) of Theorem 1.2, we have 
  $$
\Bbb E[|X(m, \Phi_N)|^2]=\frac{m(N-m)}{N(N-1)}   
\big(\sum_{i=1}^s n_i|z_i|^2
+\frac{m(m-1)}{N(N-1)}|\sum_{i=1}^s n_iz_i|^2\big).\leqno(23)
   $$  
On the other hand, for each $s$-tuple $(k_1,k_2,\ldots,k_s)$ of
nonnegative integers $k_1,k_2,\ldots,k_s$ such that $k_1\le n_1,k_2\le n_2,
\ldots,k_s\le n_s$ and  $\sum_{i=1}^sk_i=m$, by (2) we get
   $$
\mathrm{Prob}\left(X(m,\Phi_N) = \sum_{i=1}^s k_iz_i\right)
=  \frac{1}{{N\choose m}}
{n_1\choose k_1}{n_2\choose k_2}\cdots {n_s\choose k_s}.\leqno(24)
   $$
Then by definition of the expectation of a  discrete random 
variable, from (24) we find that 
   \begin{eqnarray*}
 &&\Bbb E[|X(m,\Phi_N)|^2]\\
(25)\qquad &=&\frac{1}{{N\choose m}}\sum_{\sum_{i=1}^sk_i=m\atop k_1\le n_1,k_2\le n_2,\ldots, 
k_s\le n_s}{n_1\choose k_1}
{n_2\choose k_2}\cdots {n_s\choose k_s}|k_1z_1+k_2z_2+\cdots+k_sz_s|^2,
  \end{eqnarray*}
where the summation ranges over all nonnegative integers 
$k_i$  ($i=1,2,\ldots , s)$ such that $k_1\le n_1,k_2\le n_2,\ldots, 
k_s\le n_s$ and  $\sum_{i=1}^sk_i=m$. 

Note that if  $m=\sum_{i=1}^sn_i:=N$, then
the  sum on the left hand side of (22) consists of one term
(with $k_1=n_1,k_2=n_2,\ldots , k_s=n_s$) equals to 
$|\sum_{i=1}^sn_iz_i|^2$, which is (because of 
${m-2\choose m-1}=0$) identically equal to the the right hand side of (22).  
   Finally, if $m\le N-1$, then comparing the equalities (23) and (25), using the 
identities ${N\choose m}=\frac{N(N-1)}{m(m-1)}{N-2\choose m-2}$
and ${N\choose m}=\frac{N(N-1)}{m(N-m)}{N-2\choose m-1}$,
 we immediately obtain (22). 
    \end{proof}
In particular, Identity 2.17 implies the following one. 

\begin{identity} Let $s$ and $l$ be arbitrary positive integers, and let 
$m$ be a positive integer such that $m\le sl$. Then
    \begin{eqnarray*}
 &&
\sum_{\sum_{i=1}^sk_i=m \atop k_1\le l,k_2\le l,\ldots, 
k_s\le l}{l\choose k_1}
{l\choose k_2}\cdots 
{l\choose k_s}(k_1+2k_2+\cdots+sk_s)^2\\
(26)\qquad\qquad &=&
\frac{m(s+1)(3s^2lm+3slm+s^2l-4sm-sl-2m)}{12(sl-1)}
 {sl\choose m},\qquad\qquad
   \end{eqnarray*}
where the summation ranges over all nonnegative integers 
$k_i$  $(i=1,2,\ldots , s)$ such that $\sum_{i=1}^sk_i=m$ and  
$k_1\le l,k_2\le l,\ldots,k_s\le l$.
\end{identity}
  \begin{proof}
Substituting  $z_i=i$ for all $i=1,2,\ldots ,s$ and $n_1=n_2=\cdots =n_s=l$
into (22), and taking $\sum_{i=1}^s i=s(s+1)/2$ and 
$\sum_{i=1}^s i^2=s(s+1)(2s+1)/6$, it immediately reduces to the identity (26). 
 \end{proof}

We also  give a real cubic analogue of Identity 2.1 as follows.

\begin{identity}
Let $n_1,n_2,\ldots ,n_s$ be arbitrary  positive 
integers and let  $z_1,z_2,\ldots ,z_s$ be arbitrary
 real  numbers
    $(s\ge 2)$. If $m$ is a positive integer such that 
$m\le \sum_{i=1}^sn_i$, then
   \begin{eqnarray*}
 &&\sum_{\sum_{i=1}^sk_i=m\atop k_1\le n_1,k_2\le n_2,\ldots, 
k_s\le n_s}{n_1\choose k_1}
{n_2\choose k_2}\cdots {n_s\choose k_s}(k_1z_1+k_2z_2+\cdots+k_sz_s)^3\\
(27) &=&
\left(\frac{m(N-m)}{N(N-1)} -\frac{2m(m-1)}{N(N-1)} \right) \sum_{i=1}^s
 n_iz_i^3 + \frac{3m(m-1)}{N(N-1)} (\sum_{i=1}^sn_iz_i^2)(\sum_{i=1}^sn_iz_i).
  \end{eqnarray*}
  \end{identity}
  
\begin{proof} The proof of the identity (27) is based on the 
expression (5) of  Theorem 1.3, 
and since it is quite similar to that of Identity 2.17,  may be omitted. 
 \end{proof}

\begin{remark}
Consider the identity (27) as the identity in the 
ring $\Bbb R[z_1,z_2,\ldots, z_s]$ of real  polynomials 
in $s$ variables. Then by induction on $s\ge 1$, it can be easily 
to show that the identity (27) also holds in the case 
when $z_1,z_2,\ldots, z_s$ are arbitrary complex numbers.  
  \end{remark}

Furthermore, notice that by linearity Identity 2.1 can be immediately extended 
in matrix form as follows.

   \begin{identity} Denote by ${\Bbb K}^{M\times N}$ the vector  space of 
all matrices over the field   $\Bbb K$ with $M$ rows and $N$ columns
$(\Bbb K=\Bbb C$ or $\Bbb K=\Bbb R$ and $M,N\ge 1)$. 
Let $n_1,n_2,\ldots ,n_s$ be arbitrary positive 
integers and let 
${\rm{\bf A}}_1,{\rm{\bf A}}_2,\ldots ,{\rm{\bf A}}_s\in {\Bbb K}^{M\times N}$ 
be arbitrary $M\times N$ matrices. If $m$ is a  positive integer such that 
$m\le \sum_{i=1}^sn_i$, then
   \begin{eqnarray*}
&& \sum_{\sum_{i=1}^sk_i=m\atop k_1\le n_1,k_2\le n_2,\ldots, 
k_s\le n_s}{n_1\choose k_1}
{n_2\choose k_2}\cdots {n_s\choose k_s}(k_1{\rm{\bf A}}_1+k_2{\rm{\bf A}}_2+
\cdots+k_s{\rm{\bf A}}_s)\qquad\\
(28)\qquad &=&{\sum_{i=1}^s n_i\choose m}\frac{m(\sum_{i=1}^s 
n_i{\rm{\bf A}}_i)}{\sum_{i=1}^s n_i},\qquad\qquad
   \end{eqnarray*}
 where the summation ranges over all nonnegative integers 
$k_i$  $(i=1,2,\ldots , s)$ such that $k_1\le n_1,k_2\le n_2,\ldots, 
k_s\le n_s$ and  $\sum_{i=1}^sk_i=m$.
  \end{identity}

Finally, we believe that using the counting method applied in \cite{m12}, it can be 
proved the following two   matrix analogues of Identity 2.17 (which 
are verified for some  small values of $s$, $n_1,n_2,\ldots ,n_s$ and $m$).
\begin{identity}
Denote by ${\Bbb K}^{N\times N}$ the algebra of 
all square  matrices of order $N$ $(N\ge 1)$ over the field
$\Bbb K=\Bbb C$ or $\Bbb K=\Bbb R$.  Let $n_1,n_2,\ldots ,n_s$ be arbitrary positive 
integers and let 
${\rm{\bf A}}_1,{\rm{\bf A}}_2,\ldots ,{\rm{\bf A}}_s\in {\Bbb K}^{N\times N}$ 
be arbitrary square  matrices of order $N$. If $m$ is a  positive integer such that 
$m\le \sum_{i=1}^sn_i$, then
    \begin{eqnarray*}
 &&\sum_{\sum_{i=1}^sk_i=m\atop k_1\le n_1,k_2\le n_2,\ldots, 
k_s\le n_s}{n_1\choose k_1}
{n_2\choose k_2}\cdots {n_s\choose k_s}(k_1{\rm{\bf A}}_1 +k_2{\rm{\bf A}}_2+
\cdots+k_s{\rm{\bf A}}_s)^2\\
(29)\qquad &=&{\sum_{i=1}^s n_i -2\choose m-1}\sum_{i=1}^s n_i
{\rm{\bf A}}_i^2+{\sum_{i=1}^s n_i -2\choose m-2}(\sum_{i=1}^s 
n_i{\rm{\bf A}}_i)^2,
 \qquad\qquad\qquad
   \end{eqnarray*}
 where the summation ranges over all nonnegative integers 
$k_i$  $(i=1,2,\ldots , s)$ such that $k_1\le n_1,k_2\le n_2,\ldots, 
k_s\le n_s$ and  $\sum_{i=1}^sk_i=m$.
  \end{identity}

\begin{identity} Denote by ${\Bbb K}^{M\times N}$ the vector  space of 
all matrices over the field   $\Bbb K$ with $M$ rows and $N$ columns
$(\Bbb K=\Bbb C$ or $\Bbb K=\Bbb R$ and $M,N\ge 1)$.
Let ${\rm{\bf A}}^*\in{\Bbb K}^{N\times M}$ be  the conjugate transpose $($Hermitian transpose$)$
of a matrix  ${\rm{\bf A}}\in{\Bbb K}^{M\times N}$. 
  Let $n_1,n_2,\ldots ,n_s$ be arbitrary positive 
integers and let 
${\rm{\bf A}}_1,{\rm{\bf A}}_2,\ldots ,{\rm{\bf A}}_s\in {\Bbb K}^{M\times N}$ 
be arbitrary $M\times N$  matrices. If $m$ is a  positive integer such that 
$m\le \sum_{i=1}^sn_i$, then
    \begin{eqnarray*}
 &&\sum_{\sum_{i=1}^sk_i=m\atop k_1\le n_1,k_2\le n_2,\ldots, 
k_s\le n_s}{n_1\choose k_1}
{n_2\choose k_2}\cdots {n_s\choose k_s}
(\sum_{i=1}^sk_i{\rm{\bf A}}_i)(\sum_{i=1}^sk_i{\rm{\bf A}}_i^*)\\
(30) &=&{\sum_{i=1}^s n_i -2\choose m-1}\sum_{i=1}^s n_i
{\rm{\bf A}}_i{\rm{\bf A}}_i^*  
+{\sum_{i=1}^s n_i -2\choose m-2}
(\sum_{i=1}^sn_i{\rm{\bf A}}_i)(\sum_{i=1}^sn_i{\rm{\bf A}}_i^*),
 \qquad\qquad\qquad
   \end{eqnarray*}
 where the summation ranges over all nonnegative integers 
$k_i$  $(i=1,2,\ldots , s)$ such that $k_1\le n_1,k_2\le n_2,\ldots, 
k_s\le n_s$ and  $\sum_{i=1}^sk_i=m$.
  \end{identity}


\begin{thebibliography}{99}

\bibitem{as} N. Alon and J.H. Spencer, {\it The probabilistic method},
Second Edition, John Wiley \& Sons, Tel Aviv and New York, 2000.

\bibitem{aa} G.E. Andrews, R. Askey and R. Roy, {\it Special Functions} 
({\it Encyclopedia of Mathematics and its Applications}),
Cambridge University Press, Cambridge, 2001.  
 
\bibitem{ask} R. Askey, {\it Orthogonal polynomials and special functions},
Regional Conference Series in Applied Mathematics, {\bf 21}, 
Philadelphia, PA: SIAM, pp. viii+110, 1975.

\bibitem{lj} V. Brun, J.O. Stubban, J.E. Fjeldstad, R. Tambs Lyche, K.E. 
Aubert, W. Ljunggren and E. Jacobsthal, {\it On the divisibility of the 
difference between two binomial coefficients}, Den 11te Skandinaviske 
Matematikerkongress, Trondheim, 1949, 42--54.
Johan Grundt Tanums Forlag, Oslo, 1952. 
 
\bibitem{gl} J.W.L. Glaisher, Congruences relating to the sums of products 
of the first $n$ numbers and to other sums of products, {\it Quarterly 
Journal of Mathematics} {\bf 31} (1900), 1--35.
 
\bibitem{gl2} J.W.L. Glaisher, On the residues of the sums 
of  products of the first $p-1$ numbers, and their powers, to modulus $p^2$
or $p^3$, {\it Quarterly Journal of Mathematics} {\bf 31} (1900), 321--353.

\bibitem{go} H.W. Gould, {\it Combinatorial identities}, 
Morgantown Printing and Binding Co., Morgantown,WV, 1972.

\bibitem{go2} H.W. Gould, {\it Combinatorial identities}, 
{\it Vol.}1-{\it Vol.} 8, available at 

\noindent {\tt https://www.math.wvu.edu/$\sim$gould/}.

\bibitem{gs} H.W. Gould and H.M. Srivastava, Some combinatorial identities
associated with the Vandermonde convolution, 
{\it Applied Mathematics and Computation} {\bf 84}, Nos. 2-3 (1997), 97--102.

\bibitem{gkp} R.L. Graham, D.E. Knuth and O. Patashnik, 
{\it Concrete Mathematics, A Foundation for Computer Science}, 
Second Edition, Addison-Wesley Publishing Company, 1994.

\bibitem{kr} M.J. Kronenburg, A generalization of the Chu-Vandermonde 
convolution and some harmonic number identities, preprint 
{\tt arXiv:1701.02768v3 [math.CO]}, April 2017. 

\bibitem{ks} I. Kucukoglu and Y. Simsek, 
Combinatorial identities associated with new 
families of the numbers and polynomials and their approximation values,
preprint {\tt arXiv:1711.00850v1 [math.NT]}, October 2017. 

\bibitem{mc} R.J. McIntosh, On the converse of Wolstenholme's Theorem,
{\it Acta Arithmetica} {\bf 71} (1995), 381--389.
 
\bibitem{m9} R. Me\v{s}trovi\'c, Wolstenholme's theorem: its generalizations 
and extensions in the last hundred and fifty years (1862--2012); 
preprint {\tt arXiv:1111.3057v2 [math.NT]},  2011,  31 pages.

\bibitem{m10} R. Me\v{s}trovi\'c, A note on the  congruence 
${np^k\choose mp^k}\equiv{n\choose m}({\bmod\, p^r})$, 
{\it Czechoslovak Mathematical Journal}  {\bf 62}, No. 1  (2012), 59--65.
 
\bibitem{m3} R. Me\v{s}trovi\'c, A search for primes $p$ such that the Euler
number $E_{p-3}$ is divisible by $p$, {\it Mathematics of Computation}
{\bf 83}, No. 290 (2014), 2967--2976; preprint {\tt arXiv:1212.3602 [math.NT]},
2012.
 
\bibitem{m2} R. Me\v{s}trovi\'c, On the mod $p^7$ determination of 
${2p-1\choose p-1}$, {\it Rocky Mountain Journal of Mathematics}
{\bf 44}, No. 2 (2014), 633--648; preprint {\tt arXiv:1108.1174 [math.NT]},
2011. 

\bibitem{m3} R. Me\v{s}trovi\'c, Lucas' theorem: its generalizations, 
extensions and applications (1874--2014), preprint 
{\tt arXiv:1409.3820 [math.NT]}, 2014,  51 pages.

\bibitem{m11} R. Me\v{s}trovi\'c, Congruences for Wolstenholme primes, 
{\it Czechoslovak Mathematical Journal}  {\bf 65}, No. 1  (2015), 23--253;
preprint {\tt arXiv:1108.4178 [math.NT]}, 2011. 

\bibitem{m8} R. Me\v{s}trovi\'c, 
On some discrete  random variables arising 
from recent study on statistical analysis on compressive sensing,
preprint  {\tt arXiv:1803.02260v1 [math.ST]}, 2018, 22 pages.
 
\bibitem{m7} R. Me\v{s}trovi\'c, 
A note on some sub-Gaussian random variables,
preprint {\tt arXiv:1803.04521v1 [math.PR]}, 2018, 18 pages. 

\bibitem{m1} R. Me\v{s}trovi\'c, A generalization of some random variables 
involving in certain compressive sensing problems,  
preprint  {\tt arXiv:1807.00670v2 [eess.SP]}, 2018, 12 pages.

\bibitem{m12} R. Me\v{s}trovi\'c, A generalization of the multinomial theorem,
submitted.

\bibitem{ri} J. Riordan, {\it Combinatorial identities}, John Wiley \& Sons, 
1968.

\bibitem{sl} N.J.A. Sloane, {\it The On-Line Encyclopedia of Integer 
Sequences}, {\tt http:www.research.att.com/$\sim$njas/sequences/seis.html}.  

\bibitem{so} A.D. Sokal, How to  generalize (and not to 
generalize) the Chu-Vandermonde identity, preprint 
{\tt arXiv:1804.08919v1 [math.CO]}, April 2018.

\bibitem{ssa} LJ. Stankovi\'c, S. Stankovi\'c and M. Amin, 
Missing samples analysis in signals for applications to L-estimation and
compressive sensing, {\it Signal Processing}  {\bf 94}, 
No. 1 (2014), 401--408. 

\bibitem{sso} LJ. Stankovi\'c, S. Stankovi\'c, I. Orovi\'c and M. Amin, 
Robust time-frequency analysis based on the  L-estimation and
compressive sensing, {\it IEEE Signal Processing Letters} {\bf 20}, No. 5 
(2013),   499--502.

\bibitem{st} R.P. Stanley, {\it Enumerative Combinatorics, Volume} 1, 
Second Edition, Cambridge Studies in Advanced Mathematics, 2011.

 \bibitem{va} H.S. Vandiver, Note on Euler number criteria for the
first case of Fermat's last theorem,  {\it American  Journal  
of Mathematics} {\bf 62} (1940), 79--82.

\bibitem{vm} C. Vignat and V.H.  Moll, A probabilistic approach to some 
binomial identities, {\it Elemente der Mathematik} {\bf 70},
No. 2 (2015), 55--66; 
preprint  {\tt arXiv:1111.3732v1 [math.CO]}, 2011.

\bibitem{wo} J. Wolstenholme, On certain properties of prime numbers,
{\it Quarterly Journal of Pure and Applied Mathematics} {\bf 5}
(1862), 35--39.

\bibitem{wi} {\tt https://en.wikipedia.org/wiki/Vandermonde$\%$27s\_identity}.

\end{thebibliography}
\end{document}